\newcommand{\keywords}[1]{\par\addvspace\baselineskip
\noindent\keywordname\enspace\ignorespaces#1}
\begin{document}

\mainmatter  

\title{Linear and nonlinear fractional Voigt models\thanks{This is 
a preprint of a paper whose final and definite form 
will appear in the Springer LNEE book series.
Submitted 16-April-2016; Revised 11-June-2016; 
Accepted 12-June-2016.}}

\titlerunning{Linear and nonlinear fractional Voigt models}  

\toctitle{Linear and nonlinear fractional Voigt models}

% ---------------------------------------------------------

\author{Amar Chidouh\inst{1}\thanks{This work is part of first author's 
Ph.D., which is carried out at Houari Boumediene University, Algeria.}
\and Assia Guezane-Lakoud\inst{2} 
\and Rachid Bebbouchi\inst{3} 
\and \\ Amor Bouaricha\inst{4}
\and Delfim F. M. Torres\inst{5}\thanks{Corresponding author.}}

\authorrunning{A. Chidouh et al.} 

\tocauthor{A. Chidouh, A. Guezane-Lakoud, 
R. Bebbouchi, A. Bouaricha, D. F. M. Torres}

% ---------------------------------------------------------

\institute{% Amar Chidouh
Laboratory of Dynamic Systems, 
Houari Boumediene University,
Algiers, Algeria\\
\email{m2ma.chidouh@gmail.com}
\and % Assia Guezane-Lakoud
Laboratory of Advanced Materials, 
Badji Mokhtar-Annaba University, Algeria\\
\email{a\_guezane@yahoo.fr}
\and % Rachid Bebbouchi
Laboratory of Dynamic Systems, 
Houari Boumediene University,
Algiers, Algeria\\
\email{rbebbouchi@hotmail.com}
\and % Amor Bouaricha
Laboratory of Industrial Mechanics, 
Badji Mokhtar-Annaba University, Algeria\\
\email{bouarichaa@yahoo.fr}
\and % Delfim F. M. Torres
Department of Mathematics, Center for Research 
and Development in Mathematics and Applications (CIDMA),
University of Aveiro, 3810--193 Aveiro, Portugal\\
\email{delfim@ua.pt}}

\maketitle

% ---------------------------------------------------------

\begin{abstract}
We consider fractional generalizations of the ordinary 
differential equation that governs the creep phenomenon. 
Precisely, two Caputo fractional Voigt models are considered:
a rheological linear model and a nonlinear one. In the linear
case, an explicit Volterra representation of the solution 
is found, involving the generalized Mittag-Leffler function 
in the kernel. For the nonlinear fractional Voigt model, an 
existence result is obtained through a fixed point theorem.
A nonlinear example, illustrating the obtained existence
result, is given.

\keywords{fractional differential equation, creep phenomenon, 
initial value problem, Mittag-Leffler function, fixed point theorem.}

\medskip

{\bf MSC 2010:} 26A33, 34A08.
\end{abstract}

% -------------------------------------------------

\section{Introduction}
\label{sec:1}

To study the behaviour of viscoelastic materials, one often uses rheological
models that can be of Voigt or Maxwell type or a combination of these basic
models \cite{reol}. For example, the classical phenomenon of creep, in its
simplest form, is known to be governed by a linear ordinary differential 
equation of order one, given by the linear Voigt model:
\begin{equation}
\label{1111}
\eta \frac{d\epsilon (t)}{dt}+E\epsilon (t)=\sigma(t),  
\quad \sigma(0) = 0,
\end{equation}
where $\eta$ is the viscosity coefficient and $E$ is the modulus of the
elasticity. For a given stress history $\sigma$, the solution of (\ref{1111}) 
is given by 
\begin{equation}
\label{simple}
\epsilon(t)=\frac{1}{\eta}
\int\limits_{0}^{t}e^{-\frac{t-s}{\tau}}
\sigma(s)ds,\quad \tau =\frac{\eta}{E},  
\end{equation}
where for $t \leq 0$ the material is
at rest, without stress and strain. The constant $\tau$ is called the
retardation time and has an analogous meaning to relaxation: it is an
estimation of the time required for the creep process to approach
completion. The expression
\begin{equation}
\label{eq:c:f}
k(t)=\frac{1}{E}\left( 1-\exp \left( -\frac{t}{\tau }\right)\right),
\quad t\geq 0,  
\end{equation}
is known as the creep function. In Section~\ref{sec:2} we generalize
\eqref{1111}--\eqref{eq:c:f}.

Fractional calculus has recently become an important tool in the analysis of
viscoelastic phenomena, such as stress-strain relationships in polymeric
materials: in \cite{MR0747787} the connection between the fractional calculus 
and the theory of Abel's integral equation is shown for materials with memory,
while a fractional order Voigt model is proposed in \cite{MR2557002} to better 
simulate the surface wave response of soft tissue-like material phantoms.
For an historical survey of the contributions on the applications 
of fractional calculus in linear viscoelasticty, see \cite{MR2974328}.
In 1996, Mainardi investigated linear fractional relaxation-oscillation 
and fractional diffusion-wave phenomena \cite{[m]}. Several other works in the same
direction of research followed: for an introduction 
to the linear operators of fractional integration 
and fractional differentiation, accessible to applied scientists, 
we refer to \cite{minna}; for a comprehensive overview of fractional 
calculus and waves in linear viscoelastic media see \cite{mina};
for a book devoted to the description of the properties 
of the Mittag-Leffler function, its numerous generalizations 
and their applications in different areas of modern science, 
we refer to \cite{mi23}; for a generalization of the partial 
differential equation of Gaussian diffusion, by using the 
time-fractional derivative, in both the Riemann--Liouville and 
Caputo senses, see \cite{mi1}. Heymans and Podlubny 
have given a physical interpretation of initial conditions 
for fractional differential equations with Riemann--Liouville
fractional derivatives \cite{reo}. Here, motivated by such results, we examine
fractional creep equations involving Caputo derivatives of order $\alpha \in
(0,1)$. Caputo derivatives were chosen because they have a major utility for
treating initial-value problems for physical and engineering applications,
where initial conditions are usually expressed in terms of integer-order
derivatives \cite{MR3384023,MR2721980}. Precisely, we begin by considering 
in Section~\ref{sec:2} the following extension to \eqref{1111}:
\begin{equation}
\label{pb1}
\left\{
\begin{array}{c}
\eta ^{\alpha }\left( ^{C}D_{0}^{\alpha }\epsilon \right)(t)
+E^{\alpha}\epsilon (t)={\sigma }(t),\quad 0<t\leq 1, \\
\epsilon (0)=0,
\end{array}
\right.  
\end{equation}
where $E,\eta >0$ and ${\sigma}$ is a continuous function defined on $[0,1]$.
While the solution (\ref{simple}) of (\ref{1111}) 
is described by an exponential function, we show that
the solution of (\ref{pb1}) is expressed in terms of the Mittag-Leffler
function (see Theorem~\ref{z}), which is a generalization 
of the exponential function and was introduced by Mittag-Leffler 
in \cite{23}, where he investigated some of their properties. 
The Mittag-Leffler function $E_{\alpha }(t)$ with $\alpha>0$ 
is defined by the series representation
\begin{equation}
\label{***}
E_{\alpha }(t)=\sum\limits_{n=0}^{\infty }\frac{t^{n}}{\Gamma (\alpha n+1)},
\quad \alpha >0,\quad t\in \mathbb{C},  
\end{equation}
where $\Gamma$ denotes the Gamma function, valid in the whole complex plane.
A straightforward generalization of the Mittag-Leffler function (\ref{***}), 
due to Wiman \cite{wiman} and used here, is obtained by replacing 
the additive constant $1$ in the argument of the Gamma function 
in (\ref{***}) by an arbitrary complex parameter $\beta$:
\begin{equation}
\label{****}
E_{\alpha ,\beta }(t)=\sum\limits_{n=0}^{\infty }\frac{t^{n}}{\Gamma (\alpha
n+\beta )},\quad \alpha >0,\quad \beta >0,\quad t\in \mathbb{C}.
\end{equation}
Mittag-Leffler functions are considered to be the queen functions of
fractional calculus and they play a fundamental role in the solution
to (\ref{pb1}). Details about the Mittag-Leffler function 
and their importance when solving fractional differential equations 
can be found in \cite{bod,MR2649238,MR3355190}
and references therein. Here we transform 
(\ref{pb1}) as a Volterra integral equation 
to obtain an explicit solution involving
the Mittag-Leffler function (see proof of Theorem~\ref{z}). 
Moreover, we give a physical interpretation to the fractional 
order Voigt model (\ref{pb1}) as a creep phenomenon, 
by finding the corresponding creep function (Theorem~\ref{thm:1}). 
Under some assumptions on $\sigma$, 
when it depends on $\epsilon$ (nonlinear Voigt model), 
in Section~\ref{sec:3} we address 
the question of existence of positive solutions, 
which also contributes to the physical 
interpretation of the model (Theorem~\ref{result}).
Roughly speaking, the existence of nontrivial positive solutions is
obtained by means of the Guo-Krasnosel'skii fixed point theorem. 
We end with an illustrative example and Section~\ref{sec:4} of conclusions.

% -------------------------------------------------

\section{Solution to the fractional rheological linear Voigt model}
\label{sec:2}

Viscoelastic relations may be expressed in both integral and differential
forms. Differential forms are related to rheological models and provide 
a more direct physical interpretation of the viscoelastic behavior. 
Integral forms are very general and appropriate for theoretical work.
In Section~\ref{sec:1} we introduced the fractional
Voigt model (\ref{pb1}) and explained its physical relevance.
Here we make use of the corresponding integral representation
to obtain an explicit solution to (\ref{pb1}).

\begin{theorem}[The fractional strain]
\label{z}
Assume that the given stress history ${\sigma}$
of the fractional initial value problem (\ref{pb1})
is a continuous function on $[0,1]$. Then 
\begin{equation}
\label{strain}
\epsilon(t)=\frac{1}{\eta ^{\alpha }}
\int_{0}^{t}(t-s)^{\alpha -1}E_{\alpha,\alpha }\left( 
-\left( \frac{t-s}{\tau }\right)^{\alpha }\right) {\sigma}(s)ds,
\end{equation}
$0\leq t\leq 1$, is the fractional strain, that is,
is the solution to (\ref{pb1}).
\end{theorem}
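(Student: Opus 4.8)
The plan is to convert the differential problem \eqref{pb1} into an equivalent Volterra integral equation of the second kind and then to construct its solution explicitly by the method of successive approximations, recognising the resolvent kernel as a generalized Mittag-Leffler function.

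\emph{Reduction.} Dividing \eqref{pb1} by $\eta^{\alpha}$ and applying the Riemann--Liouville fractional integral $I_{0}^{\alpha}g(t)=\frac{1}{\Gamma(\alpha)}\int_{0}^{t}(t-s)^{\alpha-1}g(s)\,ds$ to both sides, and using the identity $I_{0}^{\alpha}\,{}^{C}D_{0}^{\alpha}\epsilon(t)=\epsilon(t)-\epsilon(0)$ (valid for $0<\alpha<1$) together with $\epsilon(0)=0$, the problem \eqref{pb1} becomes
\begin{equation*}
\epsilon(t)=\frac{1}{\eta^{\alpha}}\,I_{0}^{\alpha}\sigma(t)-\frac{1}{\tau^{\alpha}}\,I_{0}^{\alpha}\epsilon(t),\qquad \tau=\frac{\eta}{E},\quad 0\le t\le 1.
\end{equation*}
Conversely, a continuous solution of this integral equation solves \eqref{pb1}: applying ${}^{C}D_{0}^{\alpha}$ returns the equation, and since $I_{0}^{\alpha}\sigma$ vanishes at $t=0$ the initial condition is recovered.

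\emph{Successive approximations.} Write $\epsilon=A+T\epsilon$ with $A=\eta^{-\alpha}I_{0}^{\alpha}\sigma$ and $T=-\tau^{-\alpha}I_{0}^{\alpha}$. Iterating and using the semigroup law $I_{0}^{\mu}I_{0}^{\nu}=I_{0}^{\mu+\nu}$ gives $T^{n}A=\eta^{-\alpha}(-\tau^{-\alpha})^{n}I_{0}^{(n+1)\alpha}\sigma$, so formally
\begin{equation*}
\epsilon(t)=\sum_{n=0}^{\infty}T^{n}A(t)=\frac{1}{\eta^{\alpha}}\sum_{n=0}^{\infty}\frac{(-\tau^{-\alpha})^{n}}{\Gamma\big((n+1)\alpha\big)}\int_{0}^{t}(t-s)^{(n+1)\alpha-1}\sigma(s)\,ds.
\end{equation*}
Interchanging summation and integration and pulling out the factor $(t-s)^{\alpha-1}$, the inner series is $\sum_{n\ge0}\big(-((t-s)/\tau)^{\alpha}\big)^{n}/\Gamma(n\alpha+\alpha)$, which by the definition \eqref{****} equals $E_{\alpha,\alpha}\big(-((t-s)/\tau)^{\alpha}\big)$; this is precisely \eqref{strain}.

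\emph{Main difficulty and a shortcut.} The work lies in justifying the term-by-term manipulations. On $[0,1]$ one has $\|I_{0}^{(n+1)\alpha}\sigma\|_{\infty}\le\|\sigma\|_{\infty}/\Gamma\big((n+1)\alpha+1\big)$, so the Neumann series is majorised by $\|\sigma\|_{\infty}\tau^{-\alpha}E_{\alpha,\alpha+1}(\tau^{-\alpha})$ and converges uniformly (the generalized Mittag-Leffler function being entire); this legitimises the exchange of sum and integral and forces the Picard remainder $T^{N}\epsilon$ to $0$, while uniqueness of the continuous solution of the Volterra equation (a contraction in a Bielecki-type weighted norm on $C[0,1]$) identifies the sum with $\epsilon$. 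Alternatively one may bypass the iteration and verify \eqref{strain} directly via the Laplace transform: since $\epsilon(0)=0$ one has $\mathcal{L}[{}^{C}D_{0}^{\alpha}\epsilon](s)=s^{\alpha}\widehat{\epsilon}(s)$ and $\mathcal{L}\big[t^{\alpha-1}E_{\alpha,\alpha}(-\lambda t^{\alpha})\big](s)=(s^{\alpha}+\lambda)^{-1}$ with $\lambda=E^{\alpha}/\eta^{\alpha}$, so \eqref{strain} yields $\eta^{\alpha}s^{\alpha}\widehat{\epsilon}(s)+E^{\alpha}\widehat{\epsilon}(s)=\widehat{\sigma}(s)$, i.e.\ \eqref{pb1}. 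I would present the successive-approximation argument as the main proof and keep the Laplace computation as a verification.
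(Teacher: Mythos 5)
Your proposal follows essentially the same route as the paper: reduction of \eqref{pb1} to the Volterra integral equation $\epsilon=\eta^{-\alpha}I^{\alpha}\sigma-\tau^{-\alpha}I^{\alpha}\epsilon$ (the paper cites \cite[Theorem~3.24]{kilbass} for this equivalence), followed by successive approximations whose partial sums are recognised as the truncated series of $E_{\alpha,\alpha}$. Your added justifications --- the uniform majorisation of the Neumann series, the uniqueness argument identifying the limit, and the optional Laplace-transform check --- are correct and supply details the paper's proof leaves implicit, but they do not change the underlying argument.
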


\begin{proof}
Since ${\sigma}$ is a continuous function on $[0,1]$, then
we know from \cite[Theorem~3.24]{kilbass} that 
the fractional initial problem (\ref{pb1})
is equivalent to the Volterra integral equation 
of second kind 
\begin{equation*}
\epsilon (t)=\frac{1}{\eta ^{\alpha }\Gamma (\alpha )}
\int\limits_{0}^{t}(t-s)^{\alpha -1}\sigma (s)ds-\frac{1}{\tau ^{\alpha }
\Gamma(\alpha )}\int\limits_{0}^{t}(t-s)^{\alpha -1}\epsilon (s)ds.
\end{equation*}
To solve this integral equation, we apply the method of successive
approximations. Let us consider the sequence defined by the following
recurrence relation:
$\displaystyle \epsilon_{m}=\frac{I^{\alpha}\sigma}{\eta^{\alpha}} 
-\frac{I^{\alpha }\epsilon _{m-1}}{\tau^{\alpha}}$,
where
$\displaystyle I^{\alpha }z(t)=\frac{1}{\Gamma(\alpha)}
\int\limits_{0}^{t}(t-s)^{\alpha-1}z(s)ds$.
Setting $\displaystyle \epsilon_{0}=\frac{I^{\alpha }\sigma}{\eta ^{\alpha }}$, 
we get
$\displaystyle \epsilon_{1}=\frac{I^{\alpha}\sigma}{\eta^{\alpha}} 
-\frac{I^{2\alpha}\sigma}{\eta^{\alpha}\tau^{\alpha}}$
and
$\displaystyle \epsilon_{2}=\frac{I^{\alpha}\sigma}{\eta^{\alpha}} 
-\frac{I^{2\alpha}\sigma}{\eta^{\alpha }\tau^{\alpha }}
+\frac{I^{3\alpha}\sigma}{\eta^{\alpha}\tau^{2\alpha}}$.
Continuing this process, we obtain that
\begin{equation*}
\epsilon_{m} =\frac{1}{\eta ^{\alpha }}\sum\limits_{k=0}^{m}\left( 
-\frac{1}{\tau ^{\alpha }}\right) ^{k}I^{k\alpha +\alpha }\sigma.
\end{equation*}
Consequently, we have
\begin{equation*}
\epsilon _{m}(t)=\frac{1}{\eta^{\alpha }}\int\limits_{0}^{t}(t-s)^{\alpha
-1}\sum\limits_{k=0}^{m}\frac{(t-s)^{k\alpha }}{\Gamma (k\alpha +\alpha )}
\left(-\frac{1}{\tau ^{\alpha }}\right)^{k}\sigma (s)ds.
\end{equation*}
Taking the limit as $m\rightarrow \infty$, and by (\ref{****}), 
we obtain the explicit solution (\ref{strain}).
\hfill $\qed$
\end{proof}

\begin{remark}
Our fractional problem (\ref{pb1}) provides a generalization 
to the linear Voigt creep model (\ref{1111}).
If we take $\alpha =1$, then 
Theorem~\ref{z} gives the solution (\ref{simple})
to the classical problem (\ref{1111}).
\end{remark}

We now generalize the creep function \eqref{eq:c:f} 
to our fractional Voigt model (\ref{pb1}).

\begin{theorem}[The fractional creep function]
\label{thm:1}
The creep function associated with the fractional 
initial value problem (\ref{pb1}) is given by
\begin{equation}
\label{formula}
k_{\alpha }(t)
=-\left( \frac{\tau }{\eta }\right) ^{\alpha }\left( E_{\alpha}\left( 
-\left( \frac{t}{\tau }\right) ^{\alpha }\right) -1\right).
\end{equation}
\end{theorem}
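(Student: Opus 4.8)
The plan is to obtain the creep function as the strain response to the Heaviside unit step stress, exactly as in the classical case where $k(t)$ in~\eqref{eq:c:f} is the solution of~\eqref{1111} with $\sigma(t)\equiv 1$. So first I would set $\sigma(s)\equiv 1$ in the explicit strain formula~\eqref{strain} of Theorem~\ref{z}, giving
\begin{equation*}
k_\alpha(t)=\frac{1}{\eta^\alpha}\int_0^t (t-s)^{\alpha-1}E_{\alpha,\alpha}\!\left(-\left(\frac{t-s}{\tau}\right)^{\alpha}\right)ds .
\end{equation*}
The substitution $u=t-s$ turns this into $\frac{1}{\eta^\alpha}\int_0^t u^{\alpha-1}E_{\alpha,\alpha}(-(u/\tau)^\alpha)\,du$, so the whole problem reduces to evaluating this single integral in closed form.

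The key step is a termwise integration using the series~\eqref{****}: writing $E_{\alpha,\alpha}(-(u/\tau)^\alpha)=\sum_{n\ge 0}\frac{(-1)^n u^{\alpha n}}{\tau^{\alpha n}\Gamma(\alpha n+\alpha)}$, multiplying by $u^{\alpha-1}$, and integrating from $0$ to $t$ gives
\begin{equation*}
\int_0^t u^{\alpha-1}E_{\alpha,\alpha}\!\left(-\left(\tfrac{u}{\tau}\right)^{\alpha}\right)du=\sum_{n=0}^\infty \frac{(-1)^n}{\tau^{\alpha n}\Gamma(\alpha n+\alpha)}\cdot\frac{t^{\alpha n+\alpha}}{\alpha n+\alpha}=\sum_{n=0}^\infty \frac{(-1)^n t^{\alpha(n+1)}}{\tau^{\alpha n}\Gamma(\alpha n+\alpha+1)},
\end{equation*}
where I used $\Gamma(\alpha n+\alpha)(\alpha n+\alpha)=\Gamma(\alpha n+\alpha+1)$. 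Reindexing with $m=n+1$ and factoring out $\tau^\alpha$ to match the exponents of $t$ and $\tau$ converts the right-hand side into $\tau^\alpha\sum_{m=1}^\infty \frac{(-1)^{m-1}(t/\tau)^{\alpha m}}{\Gamma(\alpha m+1)}=-\tau^\alpha\sum_{m=1}^\infty\frac{(-(t/\tau)^\alpha)^m}{\Gamma(\alpha m+1)}=-\tau^\alpha\bigl(E_\alpha(-(t/\tau)^\alpha)-1\bigr)$, by the definition~\eqref{***} of $E_\alpha$ (the $m=0$ term being the subtracted $1$). Dividing by $\eta^\alpha$ yields precisely formula~\eqref{formula}.

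The only genuine obstacle is justifying the interchange of summation and integration. Since the Mittag-Leffler series~\eqref{****} has infinite radius of convergence, it converges uniformly on the compact set $u\in[0,t]\subseteq[0,1]$, and $u^{\alpha-1}$ is integrable there (as $\alpha>0$), so dominated convergence on partial sums — or equivalently uniform convergence against the fixed integrable weight $u^{\alpha-1}$ — legitimates the termwise integration; I would note this briefly rather than belabor it. A secondary point worth a sentence is the conceptual one that $k_\alpha$ is by definition the creep compliance, i.e. the strain under unit step stress, which is why substituting $\sigma\equiv 1$ is the right thing to do; this parallels the classical derivation of~\eqref{eq:c:f} from~\eqref{1111} and specializes correctly to it when $\alpha=1$, since $E_1(-t/\tau)=e^{-t/\tau}$ and $(\tau/\eta)^1=1/E$.
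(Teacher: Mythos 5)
Your argument is correct, and it reaches the theorem by a slightly different route than the paper. The paper defines $k_{\alpha}$ as the kernel in the Stieltjes representation $\epsilon(t)=\int_{0}^{t}k_{\alpha}(t-s)\,d\sigma(s)$, integrates \eqref{strain} by parts to produce a term in $\sigma'(s)$, and then invokes linearity of the strain in the stress to read off $k_{\alpha}(t)=\eta^{-\alpha}t^{\alpha}E_{\alpha,\alpha+1}\left(-\left(t/\tau\right)^{\alpha}\right)$; you instead take the equivalent (and standard) definition of the creep compliance as the response to a unit step stress, substitute $\sigma\equiv 1$ into \eqref{strain}, and evaluate the resulting integral termwise. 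The two definitions agree because for the Heaviside step the measure $d\sigma$ is a unit mass at the origin, so the Stieltjes convolution collapses to $k_{\alpha}(t)$; and your termwise integration of $\int_{0}^{t}u^{\alpha-1}E_{\alpha,\alpha}\left(-\left(u/\tau\right)^{\alpha}\right)du$ lands on exactly the same intermediate expression $t^{\alpha}E_{\alpha,\alpha+1}\left(-\left(t/\tau\right)^{\alpha}\right)$ that the paper obtains, after which the reindexing $m=n+1$ is identical in both proofs. What your route buys is that it avoids the integration by parts (and hence any implicit smoothness assumption on $\sigma$, which the paper's appearance of $\sigma'$ requires) and replaces the somewhat informal ``the strain is linear in the stress, therefore the creep function is\ldots'' step with a direct computation; what the paper's route buys is the by-product representation of $\epsilon$ in terms of $\sigma(0)$ and $\sigma'$. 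Your remark on justifying the interchange of sum and integral via uniform convergence of the entire Mittag-Leffler series against the integrable weight $u^{\alpha-1}$ is a point the paper passes over silently, and is worth keeping.
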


\begin{proof}
We find the creep function $k_{\alpha}$ by using  
(\ref{strain}), where the latter is defined as
\begin{equation*}
\epsilon (t)=\int_{0}^{t}k_{\alpha }(t-s)d{\sigma }(s),
\quad 0\leq t\leq 1.
\end{equation*}
Integrating expression (\ref{strain}) by parts, we obtain that
\begin{multline*}
\epsilon(t)=\frac{1}{\eta ^{\alpha }}t^{\alpha }E_{\alpha ,\alpha +1}\left(
-\left( \frac{t}{\tau }\right) ^{\alpha }\right) {\sigma }(0)\\
+\frac{1}{\eta^{\alpha }}\int_{0}^{t}(t-s)^{\alpha }
E_{\alpha ,\alpha +1}\left(-\left(
\frac{t-s}{\tau }\right)^{\alpha }\right) {\sigma }^{\prime }(s)ds,
\quad 0\leq t\leq 1.
\end{multline*}
The strain is linear in the stress. Therefore,
the creep function is given by
\begin{equation}
\label{form}
k_{\alpha }(t)=\frac{1}{\eta ^{\alpha }}t^{\alpha }E_{\alpha ,\alpha
+1}\left( -\left( \frac{t}{\tau }\right) ^{\alpha }\right).
\end{equation}
Now, by using the definition of Mittag-Leffler function in (\ref{form}),
we obtain that
\begin{equation*}
\begin{split}
k_{\alpha }(t) 
&=\frac{1}{\eta ^{\alpha }}t^{\alpha}
\sum\limits_{n=0}^{\infty }(-1)^{n}\frac{(\frac{t}{\tau})^{\alpha n}}{
\Gamma (\alpha n+\alpha +1)} 
=\left( \frac{\tau }{\eta }\right) ^{\alpha }\sum\limits_{n=0}^{\infty}
(-1)^{n}\frac{\left( \frac{t}{\tau }\right) ^{\alpha n+\alpha }}{\Gamma
(\alpha n+\alpha +1)} \\
&=-\left( \frac{\tau }{\eta }\right) ^{\alpha }\sum\limits_{n=1}^{\infty}
(-1)^{n}\frac{\left( \frac{t}{\tau }\right) ^{\alpha n}}{\Gamma (\alpha n+1)}
=-\left( \frac{\tau }{\eta }\right) ^{\alpha }\left( E_{\alpha }\left(
-\left( \frac{t}{\tau }\right) ^{\alpha }\right) -1\right).
\end{split}
\end{equation*}
The proof is complete. \hfill $\qed$
\end{proof}

\begin{remark}
If we take $\alpha =1$, then we obtain from 
Theorem~\ref{thm:1} that
\begin{equation*}
k_{1}(t)=\frac{\tau }{\eta }\left( 1-E_{1}\left( 
-\left( \frac{t}{\tau }\right) \right) \right) 
=\frac{1}{E}\left( 1-\exp \left( -\left( \frac{t}{\tau }\right) \right) \right)
=k(t),
\end{equation*}
that is, the creep function (\ref{eq:c:f}) is a special case 
of the $k_{\alpha }(t)$  given by (\ref{formula}).
\end{remark}

Next we generalize (\ref{pb1}) to the nonlinear case, 
where the stress $\sigma$ depends on the strain $\epsilon$.

% -------------------------------------------------

\section{A nonlinear fractional Voigt model}
\label{sec:3}

By applying the method of successive approximations, we have proved 
in Section~\ref{sec:2} that the fractional initial value problem 
(\ref{pb1}) has a solution $\epsilon$ in $C[0,1]$ given by 
(\ref{strain}). Let us now consider (\ref{pb1}) as a nonlinear problem,
that is, consider a fractional Voigt model described by a 
differential equation with a nonlinear right-hand side $\sigma$ 
depending on $\epsilon$:
\begin{equation}
\label{pp}
\left\{
\begin{array}{c}
\eta ^{\alpha }\left( ^{C}D_{0}^{\alpha }\epsilon \right) (t)+E^{\alpha}
\epsilon (t)={\sigma }(\epsilon (t)),
\quad 0<t\leq 1, \\
\epsilon (0)=0.
\end{array}
\right.  
\end{equation}
We deal with the solvability of the initial value problem (\ref{pp}).
Precisely, we are interested in proving the existence of positive solutions, 
which are the ones that make sense in physics. To establish existence 
of solutions has been a very active research area in mathematics. 
This is particularly true with respect to existence of solutions 
for fractional differential equations \cite{MR3381102},
which is also explained by the development of other fields 
of research, such as physics, mechanics and biology 
\cite{MR3384023,MR3376810,MR2946842}. Many methods 
are used to prove existence of a solution, such 
as the fixed point technique, for which several theories 
are available \cite{MR2962045,MyID:328,MyID:323}. 
In recent years, there has been many papers
investigating the existence of positive solutions: see 
\cite{MyID:345,vanishing1,existence,MR2963223,[9],vanishing2} 
and references therein.  In this section, motivated by many papers 
that discuss the existence of solutions to initial value problems, 
e.g. \cite{amr1,amr2,guezane}, we focus
on the fractional initial value problem (\ref{pp}). 

\begin{theorem}[Existence of a positive solution 
to the nonlinear fractional Voigt model (\ref{pp})]
\label{result} 
Assume that $\sigma: \mathbb{R}_{+}\rightarrow \mathbb{R}_{+}$ 
is a continuous, convex and decreasing function.
Let $\mathbf{E}_{0}:=\lim_{\epsilon \rightarrow 0}
\frac{{\sigma }(\epsilon)}{\epsilon}$ and 
$\mathbf{E}_{\infty }:=\lim_{\epsilon \rightarrow \infty}
\frac{{\sigma }(\epsilon)}{\epsilon}$.
If $\mathbf{E}_{0}=\infty $ and $\mathbf{E}_{\infty }=0$, then problem 
(\ref{pp}) has at least one nontrivial positive bounded solution 
$\epsilon \in X$.
\end{theorem}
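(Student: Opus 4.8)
The plan is to recast the nonlinear problem (\ref{pp}) as a fixed point equation and to apply the Guo--Krasnosel'skii cone fixed point theorem in $X=C[0,1]$. Arguing exactly as in the proof of Theorem~\ref{z} (the linear Volterra part has the Mittag-Leffler kernel as resolvent), problem (\ref{pp}) is equivalent to $\epsilon=T\epsilon$, where
\begin{equation*}
(T\epsilon)(t)=\frac{1}{\eta^{\alpha}}\int_{0}^{t}(t-s)^{\alpha-1}E_{\alpha,\alpha}\!\left(-\left(\frac{t-s}{\tau}\right)^{\alpha}\right)\sigma(\epsilon(s))\,ds,\qquad 0\le t\le 1.
\end{equation*}
Writing the kernel as $g(t-s)$ with $g(u)=\eta^{-\alpha}u^{\alpha-1}E_{\alpha,\alpha}(-(u/\tau)^{\alpha})$, I would first record the properties of $g$ that drive the argument: since $E_{\alpha,\alpha}(-x)$ is completely monotone on $[0,\infty)$ for $0<\alpha\le1$, the function $g$ is positive and nonincreasing on $(0,1]$ and is weakly singular but integrable at $0$, and term-by-term integration gives $\int_{0}^{t}g(u)\,du=\eta^{-\alpha}t^{\alpha}E_{\alpha,\alpha+1}(-(t/\tau)^{\alpha})=k_{\alpha}(t)$, the creep function of Theorem~\ref{thm:1}. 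In particular, using $0<E_{\alpha}(-x)<1$ for $x>0$, the constant $\Lambda:=k_{\alpha}(1)=(\tau/\eta)^{\alpha}\bigl(1-E_{\alpha}(-(1/\tau)^{\alpha})\bigr)$ lies in $(0,\infty)$; moreover $\sigma$, being continuous and decreasing on $\mathbb{R}_{+}$, is bounded with $\sigma(0)=\max\sigma<\infty$, and $\mathbf{E}_{0}=\infty$ forces $\sigma(0)>0$.

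I would then work on the cone $P=\{\epsilon\in C[0,1]:\epsilon(t)\ge0\text{ for all }t\in[0,1]\}$ and check that $T:P\to P$ is completely continuous. Positivity of $T\epsilon$ is immediate from $g\ge0$ and $\sigma\ge0$; uniform boundedness follows from $\|T\epsilon\|_{\infty}\le\sigma(0)\,\Lambda$; equicontinuity, uniform in $\epsilon\in P$, follows from the monotonicity of $g$ and the estimate $|(T\epsilon)(t_{2})-(T\epsilon)(t_{1})|\le2\sigma(0)\,k_{\alpha}(|t_{2}-t_{1}|)$ together with $k_{\alpha}(0)=0$ and the continuity of $k_{\alpha}$ on $[0,1]$; and continuity of $T$ follows from the continuity of $\sigma$ by dominated convergence. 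Arzel\`{a}--Ascoli then gives complete continuity.

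The core of the proof is the choice of the two radii. For the outer sphere, pick $R>\sigma(0)\,\Lambda$ and set $\Omega_{2}=\{\epsilon\in X:\|\epsilon\|_{\infty}<R\}$; then $\|T\epsilon\|_{\infty}\le\sigma(0)\,\Lambda<R=\|\epsilon\|_{\infty}$ for every $\epsilon\in P\cap\partial\Omega_{2}$. For the inner sphere, use $\mathbf{E}_{0}=\infty$ to pick $r\in(0,R)$ with $\sigma(r)/r\ge1/\Lambda$ and set $\Omega_{1}=\{\epsilon\in X:\|\epsilon\|_{\infty}<r\}$; then for $\epsilon\in P\cap\partial\Omega_{1}$ one has $0\le\epsilon(s)\le r$, so $\sigma(\epsilon(s))\ge\sigma(r)$ since $\sigma$ is decreasing, whence
\begin{equation*}
\|T\epsilon\|_{\infty}\ge(T\epsilon)(1)=\int_{0}^{1}g(1-s)\,\sigma(\epsilon(s))\,ds\ge\sigma(r)\,\Lambda\ge r=\|\epsilon\|_{\infty}.
\end{equation*}
Since $\overline{\Omega_{1}}\subset\Omega_{2}$, the Guo--Krasnosel'skii theorem yields a fixed point $\epsilon^{*}\in P\cap(\overline{\Omega_{2}}\setminus\Omega_{1})$. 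It satisfies $r\le\|\epsilon^{*}\|_{\infty}\le R$, hence is nontrivial and bounded; it lies in $P$, hence is nonnegative; and since $\epsilon^{*}(0)=0$ and $\sigma$ is positive near $0$, from $\epsilon^{*}=T\epsilon^{*}$ one gets $\epsilon^{*}(t)=\int_{0}^{t}g(t-s)\sigma(\epsilon^{*}(s))\,ds>0$ for $0<t\le1$, which proves the claim.

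The step I expect to be the main obstacle is the careful handling of the weakly singular convolution kernel: justifying complete continuity of $T$ despite the $(t-s)^{\alpha-1}$ singularity, pinning down positivity and monotonicity of $E_{\alpha,\alpha}(-(\cdot/\tau)^{\alpha})$, and identifying $\int_{0}^{t}g$ with the creep function $k_{\alpha}$, which is exactly what renders the two norm inequalities transparent. The convexity of $\sigma$ plays only an auxiliary role: together with continuity, monotonicity and positivity it guarantees that the limits $\mathbf{E}_{0}$ and $\mathbf{E}_{\infty}$ exist and that $\sigma$ is bounded, so the estimates above are available; it is not otherwise needed in the fixed point step.
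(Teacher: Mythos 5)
Your proposal is correct, and while it keeps the paper's overall architecture --- the same integral operator (\ref{3}), the same cone of nonnegative functions in $C[0,1]$, complete continuity via Arzel\`{a}--Ascoli, and the Guo--Krasnosel'skii theorem (Lemma~\ref{kras_copy(1)}) --- the two boundary estimates, which are the heart of the argument, are obtained by a genuinely different route. For the inner sphere the paper integrates $T\epsilon_{1}$ over $[0,1]$, applies Fubini, and then invokes Jensen's inequality (Lemma~\ref{1'}) with the probability measure $(\alpha+1)(1-s)^{\alpha}ds$ --- this is where the convexity hypothesis enters --- whereas you simply bound $\sigma(\epsilon(s))\geq\sigma(r)$ pointwise from the decreasing hypothesis and evaluate $T\epsilon$ at $t=1$. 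For the outer sphere the paper passes through $\overline{\sigma}$ and Wang's lemma (Lemma~\ref{1}) to exploit $\mathbf{E}_{\infty}=0$, whereas you note that a positive decreasing $\sigma$ is bounded by $\sigma(0)$, so $\Vert T\epsilon\Vert\leq\sigma(0)\Lambda$ with $\Lambda=k_{\alpha}(1)$ and any $R>\sigma(0)\Lambda$ works; your identification of $\int_{0}^{t}g$ with the creep function of Theorem~\ref{thm:1} also yields a cleaner equicontinuity modulus and sharper constants than the direct computation in Lemma~\ref{123}. The trade-off is instructive: your argument shows that, once $\sigma$ is assumed positive and decreasing, neither convexity nor the hypothesis $\mathbf{E}_{\infty}=0$ is actually needed (the latter is automatic), while the paper's Jensen/$\overline{\mathbf{E}}$ machinery follows the standard superlinear--sublinear template that would generalize to non-monotone nonlinearities. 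Two minor imprecisions in your write-up, neither of which damages the proof: the limits $\mathbf{E}_{0}$ and $\mathbf{E}_{\infty}$ are hypotheses, not consequences of convexity; and strict positivity of the kernel $E_{\alpha,\alpha}(-x)$ for $x\geq 0$, which you use for the final positivity claim, deserves the one-line justification that a nonzero completely monotone function cannot vanish.
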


Our Theorem~\ref{result} is proved by the Guo--Krasnosel'skii 
fixed point theorem \cite{geo,MR2345924}. Roughly speaking, 
our analysis is mainly based  on the following result on the monotonicity 
of the Mittag-Leffler function, which was first proved by Schneider 
in \cite{schneider}: the generalized Mittag-Leffler function 
$E_{\alpha ,\beta }(-t)$ with $t\geq 0$ is completely monotonic if and only if
$0<\alpha \leq 1$ and $\beta \geq \alpha $. Thus,
if $0<\alpha \leq 1$ and $\beta \geq \alpha$, then
$(-1)^{n}\frac{d^{n}}{dt^{n}}E_{\alpha ,\beta }(-t)\geq 0$
for all $n=0,1,2,\ldots$
Note that $\sigma (0)\neq 0$ because our function $\sigma$ 
is positive and decreasing and we are interested 
in a nontrivial solution.

% --------------

\subsection{Auxiliary results}

Let $X=C[0,1]$ be the Banach space
of all continuous real functions defined on $[0,1]$ with the norm
$\left\Vert u\right\Vert =\sup_{t\in \lbrack 0,1]}\left\vert u(t)\right\vert$.
Define the operator $T:X\rightarrow X$ by
\begin{equation}
\label{3}
T\epsilon (t)=\frac{1}{\eta ^{\alpha }}\int_{0}^{t}(t-s)^{\alpha-1}
E_{\alpha ,\alpha }\left( -\left( \frac{t-s}{\tau}\right)^{\alpha}
\right){\sigma}(\epsilon (s))ds,\quad 0\leq t\leq 1.  
\end{equation}
Using the Guo--Krasnosel'skii fixed point theorem, we prove existence of
nontrivial positive solutions. For that we first present 
and prove several lemmas. In what follows, $K$ is the cone
$K:=\left\{\epsilon \in X:\ \epsilon (t)\geq 0, \ 0\leq t\leq 1\right\}$.

\begin{lemma}
\label{123}The operator $T:K\rightarrow K$\ is completly continuous.
\end{lemma}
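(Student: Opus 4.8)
The plan is to verify the two hypotheses of the classical definition of a completely continuous operator: that $T$ maps the cone $K$ into itself, and that $T$ is continuous and maps bounded sets to relatively compact sets. First I would check invariance: for $\epsilon \in K$ we have $\sigma(\epsilon(s)) \geq 0$ since $\sigma: \mathbb{R}_+ \to \mathbb{R}_+$, and the kernel $(t-s)^{\alpha-1} E_{\alpha,\alpha}(-((t-s)/\tau)^\alpha)$ is nonnegative on $0 \leq s \leq t \leq 1$ because $0 < \alpha \leq 1$ and $\beta = \alpha \geq \alpha$, so Schneider's complete monotonicity result applies and gives $E_{\alpha,\alpha}(-x) \geq 0$ for $x \geq 0$. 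Hence $T\epsilon(t) \geq 0$ and $T\epsilon \in K$; continuity of $T\epsilon$ as a function of $t$ follows from the continuity of $\sigma$ and standard properties of the fractional integral with a weakly singular but integrable kernel.

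Next I would establish continuity of the operator $T$ on $K$. Given $\epsilon_n \to \epsilon$ in $X$, the values $\epsilon_n(s)$ lie in a common compact interval, $\sigma$ is uniformly continuous there, so $\sigma(\epsilon_n(s)) \to \sigma(\epsilon(s))$ uniformly in $s$; then
\begin{equation*}
\|T\epsilon_n - T\epsilon\| \leq \frac{1}{\eta^\alpha} \|\sigma(\epsilon_n(\cdot)) - \sigma(\epsilon(\cdot))\| \sup_{t \in [0,1]} \int_0^t (t-s)^{\alpha-1} E_{\alpha,\alpha}\left(-\left(\frac{t-s}{\tau}\right)^\alpha\right) ds,
\end{equation*}
and the supremum is finite (indeed bounded by $\frac{t^\alpha}{\eta^\alpha} E_{\alpha,\alpha+1}(0)$-type estimates, or simply by $\frac{1}{\Gamma(\alpha+1)}$ since $0 \leq E_{\alpha,\alpha}(-x) \leq 1/\Gamma(\alpha)$), so $\|T\epsilon_n - T\epsilon\| \to 0$.

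For compactness I would apply the Arzelà--Ascoli theorem. Let $B \subset K$ be bounded, say $\|\epsilon\| \leq R$ for all $\epsilon \in B$; since $\sigma$ is continuous it is bounded by some $M = M(R)$ on $[0,R]$, which gives uniform boundedness of $T(B)$ via the kernel estimate above. For equicontinuity, I would take $0 \leq t_1 < t_2 \leq 1$, write $T\epsilon(t_2) - T\epsilon(t_1)$ as a sum of the integral over $[t_1,t_2]$ plus the difference of the kernels integrated over $[0,t_1]$, and bound each piece uniformly in $\epsilon \in B$ using the boundedness of $\sigma$ together with the uniform continuity of the map $t \mapsto \int_0^t (t-s)^{\alpha-1} E_{\alpha,\alpha}(-((t-s)/\tau)^\alpha) g(s)\,ds$ for bounded $g$ — a standard fact for weakly singular convolution kernels. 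Thus $T(B)$ is relatively compact and $T$ is completely continuous.

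The main obstacle is the equicontinuity estimate near the singularity at $s = t$: the kernel $(t-s)^{\alpha-1}$ blows up as $s \to t$, so one cannot simply differentiate under the integral sign. The clean way around it is to note that $E_{\alpha,\alpha}(-((t-s)/\tau)^\alpha)$ is bounded (between $0$ and $1/\Gamma(\alpha)$ by complete monotonicity and the value at $0$), reducing the delicate part to the equicontinuity of the Riemann--Liouville fractional integral $I^\alpha$ acting on the bounded family $\{\sigma(\epsilon(\cdot)) : \epsilon \in B\}$, which is classical; the remaining contribution involving the variation of the Mittag-Leffler factor is Lipschitz in $t$ away from $s$ and causes no trouble after an elementary splitting of the integration domain.
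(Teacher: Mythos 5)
Your proposal is correct and follows essentially the same route as the paper: bound the completely monotone Mittag-Leffler factor by $E_{\alpha,\alpha}(0)=1/\Gamma(\alpha)$, prove uniform boundedness and equicontinuity of $T(B)$ via the standard splitting of the weakly singular kernel over $[0,t_1]$ and $[t_1,t_2]$, and conclude with Arzel\`a--Ascoli. You are in fact somewhat more explicit than the paper on the continuity of the operator $T$ and on cone invariance, and you correctly flag the extra contribution from the variation of the Mittag-Leffler factor between $t_1$ and $t_2$, which the paper's displayed equicontinuity estimate silently absorbs.
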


\begin{proof}
Taking into account the monotonicity of the Mittag-Leffler function, we have
that the operator $T:K\rightarrow K$ is continuous in view of the
assumptions of nonnegativeness and continuity of ${\sigma }$.
Let $B\subset K$ be the bounded set $B:=B(0,\eta _{0})=\left\{ \epsilon \in
K:\left\Vert \epsilon \right\Vert \leq \eta _{0},\ \eta _{0}>0\right\} $,
and let $\rho =\max_{0\leq t\leq 1,0\leq \epsilon 
\leq \eta _{0}}{\sigma}(\epsilon (t))+1$. 
Then, for any $\epsilon \in B$, we have
\begin{equation*}
\begin{split}
\left\vert T\epsilon (t)\right\vert 
&=\left\vert \frac{1}{\eta^{\alpha}}
\int_{0}^{t}(t-s)^{\alpha -1}E_{\alpha ,\alpha }\left( -\left( 
\frac{t-s}{\tau }\right) ^{\alpha }\right) {\sigma }(\epsilon (s))ds\right\vert\\
&\leq \frac{1}{\eta ^{\alpha }}\int_{0}^{t}(t-s)^{\alpha -1}E_{\alpha,\alpha}
\left(-\left( \frac{t-s}{\tau }\right) ^{\alpha }\right)
\left\vert {\sigma }(\epsilon (s))\right\vert ds \\
&\leq \frac{\rho }{\eta ^{\alpha }\Gamma (\alpha +1)}t^{\alpha}
\Rightarrow \left\Vert T\epsilon \right\Vert 
\leq \frac{\rho }{\eta^{\alpha }\Gamma (\alpha +1)}.
\end{split}
\end{equation*}
Hence, $T(B)$ is uniformly bounded. Now, we prove that the operator $T$ is
equicontinuous for each $\epsilon \in B$, any $\varepsilon >0$, and 
$t_{1},t_{2}\in \lbrack 0,1]$ with $t_{2}>t_{1}$. Let $\delta =\left( 
\frac{\eta ^{\alpha }\Gamma (\alpha +1)\varepsilon }{2
\rho}\right)^{\frac{1}{\alpha }}$. Then, for 
$\left\vert t_{2}-t_{1}\right\vert <\delta$,
\begin{equation*}
\begin{split}
\vert T\epsilon (t_{1}) &- T\epsilon (t_{2})\vert \\
&\leq \frac{\rho}{\eta ^{\alpha }\Gamma (\alpha )}\left( 
\int_{0}^{t_{1}}((t_{1}-s)^{\alpha-1}-(t_{2}-s)^{\alpha -1})ds
+\int_{t_{1}}^{t_{2}}(t_{2}-s)^{\alpha-1}ds\right)\\
&\leq \frac{\rho\left((t_{1}^{\alpha}+(t_{2}-\ t_{1})^{\alpha }
-t_{2}^{\alpha }+(t_{2}-\ t_{1})^{\alpha }\right)}{\eta^{\alpha}
\Gamma(\alpha+1)}
\leq \frac{2\rho(t_{2}-t_{1})^{\alpha}}{\eta^{\alpha}
\Gamma(\alpha +1)}=\varepsilon.
\end{split}
\end{equation*}
Therefore, $T(B)$ is equicontinuous. From the Arzela--Ascoli theorem, 
it follows that operator $T$ is completely continuous.
\hfill $\qed$
\end{proof}

The following results are also used in the proof 
of our Theorem~\ref{result}.

\begin{lemma}[Jensen's inequality \cite{MR924157}]
\label{1'} 
Let $\mu$ be a positive measure and let $\Omega$ be a
measurable set with $\mu (\Omega )=1$. Let $I$ be an interval and suppose
that $u$ is a real function in $L^{1}(\Omega )$ with $u(t)\in I$ for all 
$t\in \Omega$. If $f$ is convex on $I$, then
\begin{equation*}
f\left(\int_{\Omega }u(t)d\mu(t)\right) 
\leq \int_{\Omega }(f \circ u)(t)d\mu (t).
\end{equation*}
\end{lemma}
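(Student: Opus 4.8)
The plan is to use the classical supporting-line argument for convex functions. First I would set $c := \int_{\Omega} u(t)\, d\mu(t)$, which is finite since $u \in L^{1}(\Omega)$, and check that $c$ belongs to $I$. Because $\mu$ is a probability measure ($\mu(\Omega)=1$) and $u(t)\in I$ for every $t\in\Omega$, the average $c$ lies between $\inf I$ and $\sup I$; if $c$ happens to be an endpoint of $I$, then $u$ must equal that endpoint $\mu$-almost everywhere and the inequality holds trivially, so I may assume $c$ lies in the interior of $I$.

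The key step is to produce, using convexity of $f$, a supporting line of $f$ at the interior point $c$: there exists a slope $\beta\in\mathbb{R}$ (any value between the left and right derivatives of $f$ at $c$, which exist because $f$ is convex and $c$ is interior) such that
\[
f(s) \;\geq\; f(c) + \beta\,(s-c) \qquad \text{for all } s \in I.
\]
Equivalently, this is the supporting hyperplane theorem applied to the epigraph of $f$. At the same time I would record that a convex function on an interval is continuous on its interior, hence Borel measurable, so that $f\circ u$ is $\mu$-measurable and the right-hand integral is well defined with values in $(-\infty,+\infty]$; if it equals $+\infty$ there is nothing to prove.

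Finally, I would substitute $s = u(t)$ into the supporting-line inequality and integrate over $\Omega$ against $\mu$. Using $\mu(\Omega)=1$ together with $\int_{\Omega} u\, d\mu = c$, the linear term contributes $\beta\bigl(\int_{\Omega} u\,d\mu - c\bigr)=0$, and one obtains
\[
\int_{\Omega} (f\circ u)(t)\, d\mu(t) \;\geq\; f(c)\,\mu(\Omega) \;=\; f(c) \;=\; f\!\left(\int_{\Omega} u(t)\, d\mu(t)\right),
\]
which is exactly the asserted inequality. The only genuinely nontrivial ingredient is the existence of the supporting slope $\beta$, together with the harmless reduction to the case $c\in\operatorname{int} I$; the integration step itself is a single line. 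Since Jensen's inequality is entirely standard, an equally legitimate route is simply to invoke the reference \cite{MR924157} without reproducing the argument.
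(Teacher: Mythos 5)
Your proof is correct. The paper does not prove this lemma at all --- it is stated as a known result with a citation to Rudin \cite{MR924157} --- and your supporting-line argument (reduce to the case where the mean $c=\int_\Omega u\,d\mu$ is interior to $I$, take a slope $\beta$ between the one-sided derivatives of $f$ at $c$, integrate the inequality $f(s)\geq f(c)+\beta(s-c)$) is precisely the standard proof given in that reference, so there is no divergence of approach to report.
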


\begin{lemma}[Guo--Krasnosel'skii's fixed point theorem \protect\cite{geo}]
\label{kras_copy(1)} 
Let $X$ be a Banach space and let $K\subset X$ be a cone. 
Assume $\Omega _{1}$ and $\Omega_{2}$ are bounded open subsets of $X$
with $0\in \Omega _{1}\subset \overline{\Omega }_{1}\subset $ $\Omega _{2}$,
and let $T:K\cap (\overline{\Omega }_{2}\backslash \Omega_{1})\rightarrow K$
be a completely continuous operator such that either
\begin{enumerate}
\item $Tu\leq u$ for any $u\in K\cap \partial \Omega _{1}$ and $Tu\geq u$
for any $u\in K\cap \partial \Omega _{2}$, or

\item $Tu\geq u$ for any $u\in K\cap \partial \Omega _{1}$ and $Tu\leq u$
for any $u\in K\cap \partial \Omega _{2}$.
\end{enumerate}
Then $T$ has a fixed point in $K\cap (\overline{\Omega }_{2}\backslash
\Omega _{1})$.
\end{lemma}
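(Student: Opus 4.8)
The conditions ``$Tu\le u$'' and ``$Tu\ge u$'' are to be read as the norm inequalities $\|Tu\|\le\|u\|$ and $\|Tu\|\ge\|u\|$, so that Lemma~\ref{kras_copy(1)} is the classical cone compression--expansion theorem. The plan is to deduce it from the fixed point index $i(T,K\cap\Omega,K)$ for completely continuous self-maps of the cone $K$ having no fixed point on $K\cap\partial\Omega$. I would take as given (or build from the Leray--Schauder degree composed with a retraction of $X$ onto $K$) its four standard properties: normalization (a constant map with value in $K\cap\Omega$ has index $1$), homotopy invariance, additivity across disjoint open pieces, and the solution property (a nonzero index forces a fixed point in $K\cap\Omega$). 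Since $0\in\Omega_1\subset\Omega_2$, both $\Omega_1$ and $\Omega_2$ contain the origin in their interior, and because $0$ is interior every point of $\partial\Omega_1$ and $\partial\Omega_2$ has positive norm; throughout I may assume $T$ has no fixed point on $K\cap\partial\Omega_1$ or $K\cap\partial\Omega_2$, since otherwise such a point is already the desired fixed point. Everything then reduces to computing the two indices $i(T,K\cap\Omega_1,K)$ and $i(T,K\cap\Omega_2,K)$.

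First I would carry out the computation on a sphere where $\|Tu\|\le\|u\|$, showing the index equals $1$. On such a boundary I consider the homotopy $H(t,u)=tTu$, $t\in[0,1]$. If $u=tTu$ for some $u\in K\cap\partial\Omega$, then $\|u\|=t\|Tu\|\le t\|u\|$, and since $\|u\|>0$ this forces $t=1$, i.e.\ $u=Tu$, which is excluded. Hence $H$ is an admissible homotopy joining $T$ to the zero map, and homotopy invariance together with normalization gives index $1$.

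The harder computation is on a sphere where $\|Tu\|\ge\|u\|$, where I claim the index equals $0$; this is the step I expect to be the main obstacle. Here I fix a direction $e\in K\setminus\{0\}$ and study the family $H(t,u)=Tu+te$ for $t\ge0$. Because $T$ is completely continuous, $T(K\cap\overline\Omega)$ is bounded, and $\overline\Omega$ is bounded; hence for $t$ large enough $\|Tu+te\|$ exceeds $\sup_{\overline\Omega}\|u\|$, so $H(t,\cdot)$ has no fixed point in $K\cap\overline\Omega$ and its index is $0$. The delicate point is admissibility for the intermediate values $t\ge0$: one must choose $e$ so that $u-Tu\ne te$ on $K\cap\partial\Omega$, which is where the positive lower bound $\inf_{K\cap\partial\Omega}\|Tu\|\ge\inf_{K\cap\partial\Omega}\|u\|>0$ and the compactness of $T$ enter. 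Granting admissibility, homotopy invariance yields index $0$.

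With the two indices computed, I would finish by additivity. As $\overline\Omega_1\subset\Omega_2$ and there are no boundary fixed points,
\begin{equation*}
i\bigl(T,K\cap(\Omega_2\setminus\overline\Omega_1),K\bigr)
=i(T,K\cap\Omega_2,K)-i(T,K\cap\Omega_1,K).
\end{equation*}
In case~1, where $\|Tu\|\le\|u\|$ on $\partial\Omega_1$ and $\|Tu\|\ge\|u\|$ on $\partial\Omega_2$, the two computations give $0-1=-1$; in case~2 they give $1-0=1$. In either situation the index over the annular region $K\cap(\Omega_2\setminus\overline\Omega_1)$ is nonzero, so the solution property produces a fixed point of $T$ there, hence in $K\cap(\overline\Omega_2\setminus\Omega_1)$, which is exactly the conclusion. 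The genuine work is concentrated in the index-$0$ computation and, more foundationally, in the construction of the cone index itself; I would most likely cite the latter and give the two homotopy arguments in full.
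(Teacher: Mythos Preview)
The paper does not prove this lemma at all: it is quoted verbatim from Guo and Lakshmikantham's monograph and used as a black box in the proof of Theorem~\ref{result}. So there is no ``paper's own proof'' to compare against; your sketch already goes well beyond what the authors do.

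That said, your outline is the standard fixed-point-index argument and is structurally correct: the homotopy $tTu$ yields index~$1$ on the boundary where $\|Tu\|\le\|u\|$, a translation homotopy $Tu+te$ with $e\in K\setminus\{0\}$ yields index~$0$ on the other boundary, and additivity over the annulus gives a nonzero index there. You are right to single out admissibility of the $Tu+te$ family as the crux. Be aware, though, that the bare hypothesis $\|Tu\|\ge\|u\|$ does not by itself forbid $u=Tu+te$ for intermediate $t>0$, and ``choose $e$ carefully'' is not quite enough: in the textbook proofs this step is handled either via an auxiliary order-theoretic lemma (using that $u=Tu+te$ forces $u\ge te$ in the cone order, together with boundedness of $\partial\Omega$) or by passing through an equivalent Leray--Schauder-type boundary condition before invoking the index. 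If you ever write this up in full rather than citing it, that is the place where a precise argument is required; everything else in your sketch is routine.
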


Since ${\sigma}$ is continuous on $\mathbb{R}_{+}$, 
we can define the function
$\displaystyle \overline{{\sigma }}(\epsilon)
=\max_{0\leq z\leq \epsilon }\left\{{\sigma}(z)\right\}$.
Let
$\displaystyle \overline{\mathbf{E}}_{0}=\lim_{\epsilon \rightarrow 0}
\frac{\overline{{\sigma}}(\epsilon)}{\epsilon}$
and
$\displaystyle \overline{\mathbf{E}}_{\infty}
=\lim_{\epsilon \rightarrow \infty }
\frac{\overline{{\sigma}}(\epsilon)}{\epsilon}$.

\begin{lemma}[{See \protect\cite{[9]}}]
\label{1} 
Assume ${\sigma}$ is continuous. Then $\overline{\mathbf{E}}_{0}
=\mathbf{E}_{0}$ and $\overline{\mathbf{E}}_{\infty }
=\mathbf{E}_{\infty}$.
\end{lemma}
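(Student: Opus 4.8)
The plan is to prove both identities by trapping the ratios built from $\overline{\sigma}$ between quantities converging to $\mathbf{E}_{0}$, respectively $\mathbf{E}_{\infty}$; throughout, limits are understood in the extended half-line $[0,\infty]$, so that the conclusion applies verbatim in the situation $\mathbf{E}_{0}=\infty$, $\mathbf{E}_{\infty}=0$ of Theorem~\ref{result}. Two elementary observations drive everything: $\overline{\sigma}$ is nondecreasing by construction, and $\overline{\sigma}(\epsilon)\ge \sigma(\epsilon)\ge 0$ for all $\epsilon\ge 0$, since $\sigma$ is $\mathbb{R}_{+}$-valued. The second observation yields the ``easy'' halves at once: dividing by $\epsilon>0$ gives $\overline{\sigma}(\epsilon)/\epsilon\ge\sigma(\epsilon)/\epsilon$, whence $\liminf_{\epsilon\to 0}\overline{\sigma}(\epsilon)/\epsilon\ge \mathbf{E}_{0}$ and $\liminf_{\epsilon\to\infty}\overline{\sigma}(\epsilon)/\epsilon\ge \mathbf{E}_{\infty}$. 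It remains only to bound the corresponding limits superior from above.

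For the reverse bounds I would exploit that, $\sigma$ being continuous, for each $\epsilon>0$ the maximum in the definition of $\overline{\sigma}(\epsilon)$ is attained at some $z_{\epsilon}\in[0,\epsilon]$, i.e. $\overline{\sigma}(\epsilon)=\sigma(z_{\epsilon})$. Consider first $\epsilon\to 0$. If $\sigma(0)>0$, then $\sigma(\epsilon)\to\sigma(0)>0$ forces $\mathbf{E}_{0}=\infty$ and there is nothing to prove; so assume $\sigma(0)=0$. Then, for a given small $\epsilon$, either $\overline{\sigma}(\epsilon)=0$, contributing $0$ to the limit superior, or $\overline{\sigma}(\epsilon)>0$, in which case necessarily $z_{\epsilon}>0$ and, since $0<z_{\epsilon}\le\epsilon$ and $\sigma(z_{\epsilon})\ge 0$, one has $\overline{\sigma}(\epsilon)/\epsilon=\sigma(z_{\epsilon})/\epsilon\le\sigma(z_{\epsilon})/z_{\epsilon}$. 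Because $z_{\epsilon}\to 0$ as $\epsilon\to 0$ (it lies in $[0,\epsilon]$) and $\lim_{z\to 0}\sigma(z)/z=\mathbf{E}_{0}$, taking the limit superior along an arbitrary sequence $\epsilon_{n}\downarrow 0$ gives $\limsup_{\epsilon\to 0}\overline{\sigma}(\epsilon)/\epsilon\le\mathbf{E}_{0}$, hence $\overline{\mathbf{E}}_{0}=\mathbf{E}_{0}$.

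For $\epsilon\to\infty$ I would split according to whether $\overline{\sigma}$ stays bounded. Being nondecreasing, $\overline{\sigma}(\epsilon)$ has a limit $L\in[0,\infty]$. If $L<\infty$, then $\overline{\sigma}(\epsilon)/\epsilon\to 0\le\mathbf{E}_{\infty}$. If $L=\infty$, then $z_{\epsilon}\to\infty$ — otherwise $z_{\epsilon}$ would admit a bounded subsequence on which $\sigma(z_{\epsilon})$ is bounded by continuity, contradicting $\overline{\sigma}(\epsilon)\to\infty$ — and then $\overline{\sigma}(\epsilon)/\epsilon=\sigma(z_{\epsilon})/\epsilon\le\sigma(z_{\epsilon})/z_{\epsilon}\to\mathbf{E}_{\infty}$. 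In both cases $\limsup_{\epsilon\to\infty}\overline{\sigma}(\epsilon)/\epsilon\le\mathbf{E}_{\infty}$, and combined with the easy half this gives $\overline{\mathbf{E}}_{\infty}=\mathbf{E}_{\infty}$.

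The sandwich estimates themselves are routine; the only points requiring a little care are the degenerate cases in the bookkeeping — the argmax $z_{\epsilon}$ sitting at the endpoint $0$ near the origin, and $z_{\epsilon}$ possibly failing to diverge at infinity — together with keeping the extended-real conventions straight so that the equalities remain meaningful when one side is $\infty$ or $0$. I note that in the concrete hypotheses of Theorem~\ref{result}, where $\sigma$ is decreasing, $\overline{\sigma}\equiv\sigma(0)$ is constant and the lemma is immediate; the argument above is written for the general continuous $\sigma$ asserted in the statement.
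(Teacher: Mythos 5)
Your argument is correct. Note, however, that the paper does not prove this lemma at all: it is stated with the attribution ``See \cite{[9]}'' and used as an imported result from Wang's paper, so there is no in-paper proof to compare against. Your write-up is essentially the standard sandwich argument behind Wang's lemma: the inequality $\overline{\sigma}\ge\sigma\ge 0$ gives $\liminf\overline{\sigma}(\epsilon)/\epsilon\ge\mathbf{E}_{0}$ (resp.\ $\mathbf{E}_{\infty}$), and the attained maximum $\overline{\sigma}(\epsilon)=\sigma(z_{\epsilon})$ with $0\le z_{\epsilon}\le\epsilon$ gives the reverse bound via $\sigma(z_{\epsilon})/\epsilon\le\sigma(z_{\epsilon})/z_{\epsilon}$. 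Your handling of the degenerate cases is sound: when $\sigma(0)>0$ the easy half already forces $\overline{\mathbf{E}}_{0}=\mathbf{E}_{0}=\infty$; when $\overline{\sigma}$ stays bounded at infinity both limits are $0$; and the argument that $z_{\epsilon}\to\infty$ when $\overline{\sigma}\to\infty$ (no bounded subsequence of maximizers is possible by continuity) is correct. Two small remarks: the statement implicitly presupposes that the limits defining $\mathbf{E}_{0}$ and $\mathbf{E}_{\infty}$ exist in $[0,\infty]$, which you correctly treat as part of the hypotheses rather than something to be proved; and your closing observation --- that under the actual hypotheses of Theorem~\ref{result} the function $\sigma$ is decreasing, so $\overline{\sigma}\equiv\sigma(0)$ and the lemma is immediate --- is a nice point that the paper does not make. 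What your self-contained proof buys over the paper's citation is precisely that the reader need not consult \cite{[9]}, at the cost of a page of routine but careful bookkeeping.
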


We are now in condition to prove Theorem~\ref{result}.

% --------------------------------

\subsection{Proof of Theorem~\ref{result}}

By Lemma~\ref{123}, we know that the operator (\ref{3}) 
is completely continuous. Now, using Lemma~\ref{kras_copy(1)}, 
we give a proof to our result. Denote
$\Omega _{r_{i}}=\{\epsilon \in X:\left\Vert \epsilon \right\Vert <r_{i}\}$.
When $\mathbf{E}_{0}=\infty$, we can choose $r_{1}>0$ sufficiently small
such that ${\sigma }(\epsilon )\geq \varpi \epsilon$ for 
$\epsilon \leq r_{1}$, where $\varpi $ satisfies
$\displaystyle \left( \varpi \frac{E_{\alpha ,\alpha }(
-\frac{1}{\tau ^{\alpha }})}{\eta^{\alpha }\alpha (\alpha +1)}\right) >1$.
Now let us show that $T\epsilon \leq \epsilon$ for any $\epsilon 
\in K\cap\partial \Omega _{r_{1}}$. In fact, if there exists 
$\epsilon _{1}\in \partial \Omega _{r_{1}}$ such that 
$T\epsilon _{1}\leq \epsilon _{1}$, the following inequalities hold:
\begin{eqnarray*}
\left\Vert \epsilon _{1}\right\Vert &\geq &\left\Vert T\epsilon_{1}
\right\Vert \geq \int_{0}^{1}T\epsilon _{1}(t)dt \\
&\geq &\frac{1}{\eta ^{\alpha }}\int_{0}^{1}\int_{0}^{t}(t-s)^{\alpha-1}
E_{\alpha ,\alpha }\left( -\left( \frac{t-s}{\tau }\right) ^{\alpha}\right) 
{\sigma }(\epsilon _{1}(s))ds dt\\
&\geq &\frac{1}{\eta ^{\alpha }}E_{\alpha ,\alpha }\left( 
-\frac{1}{\tau^{\alpha }}\right) 
\int_{0}^{1}{\sigma}(\epsilon_{1}(s))\left(
\int_{s}^{1}(t-s)^{\alpha -1}dt\right) ds \\
&\geq &\frac{E_{\alpha ,\alpha }\left( -\frac{1}{\tau ^{\alpha }}\right) }{
\eta ^{\alpha }\alpha }\int_{0}^{1}(1-s)^{\alpha }{\sigma }(\epsilon_{1}(s))ds \\
&\geq &\frac{E_{\alpha ,\alpha }\left( 
-\frac{1}{\tau ^{\alpha }}\right) }{\eta^{\alpha }\alpha(\alpha +1)}
\int_{0}^{1}(\alpha +1)(1-s)^{\alpha }{\sigma}(\epsilon _{1}(s))ds.
\end{eqnarray*}
Then, by Lemma~\ref{1'}, we have
\begin{equation*}
\begin{split}
\left\Vert \epsilon _{1}\right\Vert 
&\geq \frac{E_{\alpha ,\alpha }(
-\frac{1}{\tau^{\alpha }})}{\eta ^{\alpha }\alpha (\alpha +1)}{\sigma}\left(
\int_{0}^{1}(\alpha +1)(1-s)^{\alpha }\epsilon _{1}(s)ds\right) \\
&\geq \frac{E_{\alpha ,\alpha }(-\frac{1}{\tau ^{\alpha }})}{\eta^{\alpha}
\alpha(\alpha +1)}{\sigma }\left( \int_{0}^{1}(\alpha +1)
(1-s)^{\alpha}r_{1}ds\right) \\
&\geq \frac{E_{\alpha ,\alpha }(
-\frac{1}{\tau^{\alpha}})}{\eta^{\alpha}\alpha (\alpha 
+1)}{\sigma }\left( r_{1}\right)
\geq \varpi \frac{E_{\alpha ,\alpha }(
-\frac{1}{\tau^{\alpha }})}{\eta^{\alpha }\alpha (\alpha +1)}r_{1} > r_{1},
\end{split}
\end{equation*}
which is a contradiction. Since $\mathbf{E}_{\infty }=0$, Lemma~\ref{1}
implies $\overline{\mathbf{E}}_{\infty }=0$. Thus, there exists 
$r_{2}\in (r_{1},\infty )$ such that
$\overline{{\sigma }}(r_{2})<\eta ^{\alpha }\Gamma (\alpha +1)r_{2}$.
Note that $0<\Gamma (\alpha +1)<1$ for all $\alpha \in (0,1)$. We now show
that $T\epsilon \geq \epsilon $ for any $\epsilon \in K
\cap \partial \Omega_{r_{2}}$. If there exists 
$\epsilon_{2}\in \partial \Omega _{r_{2}}$ such that 
$T\epsilon_{2}\geq \epsilon_{2}$, then
\begin{eqnarray*}
\left\Vert \epsilon _{2}\right\Vert &\leq &\left\Vert T\epsilon_{2}
\right\Vert =\sup_{t\in \lbrack 0,1]}\frac{1}{\eta ^{\alpha }}
\int_{0}^{t}(t-s)^{\alpha -1}E_{\alpha ,\alpha }\left( 
-\left(\frac{t-s}{\tau }\right)^{\alpha }\right) {\sigma }\left( 
\epsilon_{2}(s)\right) ds \\
&\leq &\frac{1}{\eta ^{\alpha }\Gamma (\alpha +1)}\max_{0<\epsilon_{2}
<r_{2}}{\sigma}(\epsilon_{2}) \\
&\leq &\frac{1}{\eta ^{\alpha }\Gamma (\alpha +1)}
\overline{{\sigma}}(r_{2})<r_{2},
\end{eqnarray*}
which is a contradiction. Hence, from the first part of the 
Lemma~\ref{kras_copy(1)}, $T$ has a fixed point in 
$K\cap(\overline{\Omega }_{r_{2}}\backslash \Omega _{r_{1}})$. 
Therefore, problem (\ref{pp}) has at least one nontrivial 
bounded positive solution $\epsilon \in X$.

% --------------

\subsection{An example}

We now take a simple example to illustrate our analysis.
Consider problem
\begin{equation}
\label{exe}
\left\{
\begin{array}{c}
^{C}D_{0}^{\frac{1}{2}}\epsilon(t) +\sqrt{2} \epsilon(t) 
=\frac{1}{1+\epsilon(t)},
\quad 0<t\leq 1, \\
\epsilon(0)=0.
\end{array}
\right.  
\end{equation}
As already mentioned, the term $\frac{1}{1+\epsilon(t)}$
is the constitutive equation of the creep.
Function ${\sigma }(\epsilon )=\frac{1}{1+\epsilon }
: \mathbb{R}_{+} \rightarrow \mathbb{R}_{+}$  
is continuous, convex and decreasing with $\sigma(0)\neq 0$.
Due to the fact that $\mathbf{E}_{0}=\infty $ and $\mathbf{E}_{\infty }=0$,
it follows from Theorem~\ref{result} that (\ref{exe}) has at least
one nontrivial bounded positive solution $\epsilon \in C[0,1]$.

% -------------------------------------------------

\section{Conclusion}
\label{sec:4}

In this work we investigated the creep phenomenon described by 
linear and nonlinear fractional order Voigt models 
involving the Caputo derivative. We were able to give 
an integral representation of our initial value problem 
and to compute the creep function in the linear case. 
The obtained Volterra integral equation involves 
the Mittag-Leffler function in the kernel, which
is a completely monotonic function in the context of our considerations.
This property was the key of our analysis to establish existence 
of positive solutions.

% -------------------------------------------------

\section*{Acknowledgments}

This research was finished while Chidouh was visiting 
University of Aveiro, Portugal. The hospitality of the 
host institution and the financial support of Houari 
Boumedienne University, Algeria, are here gratefully acknowledged. 
Torres was supported by CIDMA and FCT within project UID/MAT/04106/2013.
The authors are grateful to two referees for valuable
comments and suggestions.

% ---------------------------------------------

% -------------------------------------------------

\end{document}